\let\mathcal\mathscr
\numberwithin{equation}{section}
\newtheorem{theorem}{Theorem}[section]
\newtheorem{lemma}[theorem]{Lemma}
\theoremstyle{definition}
\renewcommand{\phi}{\varphi}
\renewcommand{\rho}{\varrho}
\renewcommand{\AA}{\mathbb{A}}
\newcommand{\ZZ}{\mathbb{Z}}
\newcommand{\QQ}{\mathbb{Q}}
\newcommand{\RR}{\mathbb{R}}
\renewcommand{\leq}{\leqslant}
\renewcommand{\geq}{\geqslant}
\newcommand{\x}{\mathbf{x}}
\DeclareMathOperator{\Mod}{mod}
\newcommand{\Br}{{\rm Br}}
\newcommand{\sumstar}{\sideset{}{^*}\sum}
\newcommand{\modstar}[1]{\left(\ZZ/#1 \ZZ\right)^{\times}}
\title[Hasse principle failures in a family of surfaces]{A positive proportion of Hasse principle failures in a family of Ch{\^a}telet Surfaces}
\author{Nick Rome}
\begin{document}

\address{School of Mathematics\\
University of Bristol \\ Bristol\\ BS8 1TW\\ UK}
\email{nick.rome@bristol.ac.uk}

\begin{abstract}
We investigate the family of surfaces defined by the affine equation $$Y^2 + Z^2 = (aT^2 + b)(cT^2 +d)$$ where $\vert ad-bc \vert=1$ and develop an asymptotic formula for Hasse principle failures. We show that a positive proportion (roughly 23.7\%) of such surfaces fail the Hasse principle, by building on previous work of la Bret\`{e}che and Browning.\\
\end{abstract}

\date{\today}

\maketitle

\thispagestyle{empty}
\section{Introduction}\label{intro}
%In here, goes some historical discussion of C-T, C and S as well as Iskoviskikh and Tim and Regis' paper. Also a description of what the Hasse principle specifically is in this setting and some %words along the lines of Tim and Regis' discussion of equivalent surfaces etc. This should be enough hopefully a discussion of Brauer-Manin obstructions and that stuff isn't necessary.  \\
%One thing to make explicit is that we can choose $a>0$ and odd.\\ \leavevmode \\
%Also introduce the counting functions of interest $N(P), \, N_{loc}(P)$ and $N_{glob}(P)$.

A Ch{\^a}telet surface over a number field $K$ is a smooth proper model of the affine surface defined by the equation
$$Y^2 - e Z^2 = f(T),$$
where $e \in K^*$ is non-square and $f \in K[T]$ is a separable polynomial of degree 3 or 4. 
%The Hasse principle holds over such a family if every surface that has a point in every completion of $K$ also has a point in $K$. 
The work of Colliot-Th{\'e}l{\`e}ne, Sansuc and Swinnerton-Dyer \cite{CTSSD} shows that the Hasse principle holds for these surfaces unless $f$ is a product of two irreducible quadratic polynomials over $K$, in which case there may be counterexamples. In this paper, we restrict attention to the case $K = \QQ$ and the Ch{\^a}telet surfaces $X_{a,b,c,d}$, defined by the equation
\begin{equation}\label{def}Y^2 + Z^2 = (aT^2 +b)(cT^2 +d), \end{equation}
for $(a,b,c,d) \in \QQ^4$ such that $abcd \neq 0$ and $ad-bc \neq 0$.

There exist several explicit counterexamples to the Hasse principle for surfaces of this type in the literature. Colliot-Th{\'e}l{\`e}ne, Coray and Sansuc \cite{CTCS} provide an infinite family of counterexamples over $\mathbb{Q}$ given by $X_{1,1-k,-1,k}$ for any positive integer $k \equiv 3$ mod 4. This generalises the earlier example of Iskovskikh \cite{Isk} where $k=3$.
An investigation of the surfaces defined by \eqref{def} was undertaken by la Bret{\`e}che and Browning \cite{BandB}, in which they vary the 4-tuple of coefficients $(a,b,c,d)$ and develop asymptotics for what proportion produce counterexamples to the Hasse principle. The major results of \cite{BandB} showed that roughly $83.3\%$ of Ch{\^a}telet surfaces have local solutions everywhere but $0\%$ are counterexamples to the Hasse principle. We aim to discover a large family of surfaces among which a positive proportion fail the Hasse principle.

When conducting investigations of the Hasse principle it is common to phrase questions in terms of the Brauer group.  In \cite{CTSSD}, Colliot-Th{\'e}l{\`e}ne et al showed that the only obstruction to the Hasse principle for these surfaces is explained by the Brauer--Manin obstruction. In the case of surfaces $X=X_{a,b,c,d}$ of the form \eqref{def}, we have $\Br X / \Br \QQ \cong\ZZ/2\ZZ$ and that the quotient is generated by the quaternion algebra $(-1, aT^2 +b)$. Hence the Brauer--Manin obstruction can be made very explicit. Given $(a,b,c,d) \in \QQ^4$ we have $$X(\AA_{\QQ})^{\Br}=\left\{(x_{\nu},y_{\nu},t_{\nu})_{\nu \in \Omega} \in X(\AA_{\QQ}):\prod_{\nu \in \Omega}(-1,at_{\nu}^2+b)_{\nu} = +1\right\},$$ where $\Omega$ denotes the set of places of $\QQ$ and $(\cdot,\cdot)_{\nu}$ denotes the Hilbert symbol associated to the field $\QQ_{\nu}$. Since $X(\QQ) \subset X(\AA_{\QQ})^{\Br}$, to produce counterexamples to the Hasse principle we simply need to find choices of coefficients $(a,b,c,d)$ such that the above product of Hilbert symbols is made $-1$ for any point $(x_{\nu},y_{\nu},t_{\nu})_{\nu \in \Omega} \in X(\AA_{\QQ})$. We will see in Section 3 that a way to do this is to impose the condition $\vert ad-bc\vert=1$ which will force the Hilbert symbol $(-1, aT^2 + b)_p$ to be constant at all odd primes $p$. Note that this condition is satisfied by surfaces in the Colliot-Th{\'e}l{\`e}ne--Coray--Sansuc family mentioned above.

%Making this choice of family to search for Hasse principle failures has several appealing features beyond the interpretation of the Hilbert symbols. Firstly, it is satisfying from a geometric perspective as the family is parametrised by points on a quadric. Moreover, this means that our counting argument can be interpreted as counting points on a quadric satisfying certain local conditions which we can deal with via an appeal to the circle method. Finally, it is satisfying to note that the previously discovered families of Hasse principle failure within the Ch{\^a}telet surfaces, the surfaces of the form $X_{1,1-k,-1,k}$ in \cite{CTCS}, all satisfy $ad-bc=1$.

To count the surfaces in this family, we make some initial reduction steps. We will want to identify $(a,b,c,d)$ with $(-a,-b,-c,-d)$ as this does not affect the surface in any way. Clearly switching the order of the two brackets on the right hand side of equation \eqref{def} will not change the equation either therefore we see that $X_{a,b,c,d}$ is invariant under the map
$$\rho_1: (a,b,c,d) \mapsto (c,d,a,b).$$
Moreover, since the surface is a projective variety, it makes no difference if we divide by $T$ in \eqref{def} and relabel $Y$ and $Z$ accordingly. Therefore $X_{a,b,c,d}$ is also invariant under $$\rho_2: (a,b,c,d) \mapsto (b,a,d,c).$$ For the same reason, $X_{a,b,c,d}$ is equivalent to $X_{\lambda a, \lambda b, \lambda c, \lambda d}$ for any $\lambda \in \QQ^*$.
Each Ch{\^a}telet surface can be written as $X_{a,b,c,d}$ where $(a,b,c,d) \in \ZZ^4_{\text{prim}}$. For such a tuple $(a,b,c,d)$ the expression $\vert ad-bc \vert =1$ is invariant under $\varrho_1$ and $\varrho_2$.
We introduce the following set which will act as our parameter space for Ch{\^a}telet surfaces $$\mathscr{M}:=\{(a,b,c,d) \in \ZZ^4/\{\pm1\}: \vert ad-bc \vert=1, \, abcd \neq 0\}/\sim,$$ where two tuples are equivalent under $\sim$ if they are in the same orbit of $\rho_1$ and $\rho_2$. We will be interested in the surfaces which have local solutions everywhere and those which have local solutions everywhere but no global solutions. These sets will be denoted $\mathscr{M}_{\text{loc}}$ and $\mathscr{M}_{\text{Br}}$, respectively. 
On choosing the sup-norm for points ${u=(a,b,c,d)}$ on $\mathscr{M}$, i.e. $|u|=\max\{|a|,|b|,|c|,|d|\}$ , we can define the counting functions
\begin{equation}\label{locbr}N_{\text{loc/Br}}(P) = \# \{u \in \mathscr{M}_{\text{loc/Br}}: |u| \leq P \}.\end{equation}
The object of this paper is to establish the following asymptotic formulae for these counting functions.

\begin{theorem}\label{theorem}
There exist $\theta, \theta' > 0$ such that
\begin{equation}\label{N_loc}
N_{\text{loc}}(P) = \frac{279}{16\pi^2} P^2 + O\left(P^{2 - \theta}\right) \end{equation}
and 
\begin{equation}\label{N_Br}
N_{\text{Br}}(P) = \frac{33}{8\pi^2} P^2 + O\left(P^{2 - \theta'}\right).
\end{equation}
\end{theorem}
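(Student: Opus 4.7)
The plan is to evaluate both counting functions by a common lattice-point scheme, with explicit local density computations forming the core of the work. First observe that $|ad-bc|=1$ automatically implies the primitivity of $(a,b,c,d)$, the coprimality of $(a,b)$ and of $(c,d)$, and (as the excerpt advertises) the constancy of the Hilbert symbol $(-1, aT^2+b)_p$ at every odd prime $p$, since for any $p\nmid 2(ad-bc)$ Hensel's lemma produces smooth $\QQ_p$-points on $X_{a,b,c,d}$ and pins down the symbol on the image of the projection to the $T$-line. In particular, local solvability at all odd primes is automatic, and the Brauer--Manin product collapses to its contributions at $p=2$ and $p=\infty$.

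Accordingly, I would write the characteristic function of $\mathscr{M}_{\text{loc}}$ as a product of a real indicator (a sign condition on $(a,b,c,d)$ guaranteeing $(aT^2+b)(cT^2+d)\geq 0$ for some $T\in\RR$) and a $2$-adic indicator (a condition on $(a,b,c,d)$ modulo a fixed power of $2$). Similarly one writes $\mathbbm{1}_{\mathscr{M}_{\text{Br}}} = \mathbbm{1}_{\mathscr{M}_{\text{loc}}}\cdot \tfrac{1-\varepsilon}{2}$, where $\varepsilon=\prod_\nu (-1,aT^2+b)_\nu\in\{\pm 1\}$ depends only on the same finite invariants; one must verify that $\varepsilon$ is in fact constant on $X(\AA_\QQ)$ for each admissible residue class, so that the Brauer--Manin obstruction is well defined and the above formula holds identically.

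To extract the asymptotic, fix a coprime pair $(a,b)$ with $|a|,|b|\leq P$. The solutions to $ad-bc=\pm 1$ form an affine line $(c,d)=(c_0,d_0)+k(a,b)$ parametrised by $k\in\ZZ$, contributing $2P/\max(|a|,|b|)+O(1)$ admissible tuples with $|c|,|d|\leq P$; of these, a fixed proportion lie in any prescribed residue class modulo a fixed power of $2$. Summing this weight over coprime $(a,b)$ with $|a|,|b|\leq P$ yields the main term of order $P^2$, while the two error sources -- the $O(1)$ above and the M\"obius inversion used to enforce coprimality -- are handled by hyperbola-style estimates as in \cite{BandB} to produce power savings $\theta,\theta'>0$.

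The principal obstacle is then to pin down the explicit constants $279/16$ and $33/8$. This requires (i) enumerating the allowed residue classes modulo a fixed power of $2$ and the allowed archimedean sign patterns that make $X_{a,b,c,d}$ locally soluble; (ii) verifying the $X(\AA_\QQ)$-constancy of $\varepsilon$ on each such class and recording its value; (iii) evaluating the resulting $2$-adic and real local densities and assembling them against the coprime-pair distribution described above, where the factor $6/\pi^2$ produces the $\pi^{-2}$ in the main term; and (iv) dividing by the factor $8$ coming from the $\{\pm 1\}$, $\rho_1$, $\rho_2$ equivalence on $\mathscr{M}$. The stated constants then follow by direct computation.
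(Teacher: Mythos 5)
Your high-level framework is correct and matches the paper in spirit: the obstruction is indeed controlled at $2$ and $\infty$, the characteristic functions of $\mathscr{M}_{\text{loc}}$ and $\mathscr{M}_{\text{Br}}$ factor into a sign condition and a congruence condition modulo $16$, and the constants are assembled from local densities together with an $8$-fold symmetry factor. However, your method for extracting the asymptotic is genuinely different from, and weaker than, the paper's. You propose fibering over coprime pairs $(a,b)$ and counting lattice points on the affine lines $(c,d)=(c_0,d_0)+k(a,b)$ meeting the box $[-P,P]^2$, which gives roughly $2P/\max(|a|,|b|)+O(1)$ points per fiber. The paper instead invokes a black-box asymptotic of Browning and Gorodnik (their Proposition 3.1, recorded here as Lemma \ref{counting}) for integer points on the symmetric variety $\{ad-bc=\pm1\}$ subject to congruence constraints, which directly delivers the factorisation $\mu_\infty^{\pm}(P)\prod_p\mu_p^{\pm}+O(\mu_\infty^{\pm}(P)^{1-\theta/2})$ with a power saving; this result rests on spectral/ergodic input rather than elementary lattice geometry.

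There is a genuine gap in your error analysis. The $O(1)$ error per fiber, summed over the $\asymp P^2$ coprime pairs $(a,b)$, gives a total error $O(P^2)$, which is the same order as the main term, not $O(P^{2-\theta})$. Waving at ``hyperbola-style estimates as in \cite{BandB}'' does not repair this: in \cite{BandB} the constraint $|ad-bc|=1$ is absent, the count has an extra free variable, and the divisor-sum machinery there does not transfer directly. To get a power saving by elementary means you would need either to reorganise the count as a shifted divisor problem (e.g.\ writing $ad=bc\pm1$ and summing $\tau$-type weights over $bc\pm1$) with genuine cancellation in the fractional-part error, or to invoke the equidistribution/spectral counting that the paper uses. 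Relatedly, when $\max(|a|,|b|)$ is comparable to $P$ the two intervals $\{k:|c_0+ka|\leq P\}$ and $\{k:|d_0+kb|\leq P\}$ are no longer nested and the heuristic $2P/\max(|a|,|b|)$ systematically overcounts, so the main-term constant would also come out wrong if the fibering were used naively. In short, your plan identifies the right local data and the right symmetry bookkeeping, but the central analytic step -- obtaining the asymptotic with a power-saving error on the quadric $ad-bc=\pm1$ with congruence conditions -- is not established by the argument you sketch and is exactly what the Browning--Gorodnik input supplies.
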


Denoting by $N(P)$ the total number of surfaces in this family, it is established in Theorem \ref{Stot} that  $$N(P) \sim \frac{32}{\pi^2}P^2.$$ Theorem \ref{theorem} therefore shows that a positive proportion of surfaces of this type fail the Hasse principle.

\subsection*{Layout of the paper}
In Section 2, we describe a set of representatives for the set we are interested in counting. Section 3 is devoted to investigating the Brauer group to determine local conditions on the coefficients $(a,b,c,d)$ that ensure local or global solubility. Finally in Section 4, the number of points satisfying the desired local conditions are counted to complete the proof of Theorem \ref{theorem}.
\subsection*{Acknowledgements}
The author is grateful to Tim Browning for suggesting the problem and his continued guidance. Thanks are also extended to Martin Bright and Julian Lyczak for their insight into the Brauer--Manin obstruction.

\section{Set-up}
%A complete description of $\mathscr{M}$ is available in \cite{BandB} but we note that among the conditions are that $X_{a,b,c,d} = X_{\lambda a, \lambda b, \lambda c, \lambda d}$ for all $\lambda \in \mathbb{Q}$. Moreover, $X_{a,b,c,d}$ is invariant under transformations of the form $(a,b,c,d) \rightarrow (b,a,d,c)$.\\
%		Therefore we may assume from here on that $a>0$ and $2 \nmid a$.
%		We now pick representatives for the surfaces in $\mathscr{M}$ as follows.

We will choose as a set of representatives for $\mathscr{M}$ the following set,
\begin{equation}\label{tot}S_{\text{tot}} =\{(a,b,c,d) \in \mathbb{Z}^4 :a>0,  \vert ad-bc \vert =1, abcd \neq 0 \}.\end{equation}
Observe that each tuple in  $\mathscr{M}$ is counted by 4 distinct elements of $S_{\text{tot}}$ due to the invariance under $\varrho_1$ and $\varrho_2$. This means there is a 1:4 correspondence between $\mathscr{M}$ and $S_{\text{tot}}$. It will be important later on to keep track of the 2-adic valuation of the coefficients. In order to simplify this, we will restrict to representatives for which $a$ is odd. We may do this by breaking up $S_{\text{tot}}$ as $$S_{\text{tot}}^{(1)} \sqcup S_{\text{tot}}^{(2)}$$ where 
$S_{\text{tot}}^{(1)} := \{ (a,b,c,d) \in S_{\text{tot}} : 2 \nmid a\}$ and $S_{\text{tot}}^{(2)} := \{ (a,b,c,d) \in S_{\text{tot}} : 2 \mid a\}$. Due to the expression $\vert ad-bc \vert= 1$ we know that when 2 divides $a$, it cannot divide $b$. Therefore by the invariance of Ch{\^a}telet surfaces under the action of $\varrho_2$ we may write $S_{\text{tot}}^{(2)} := \{ (a,b,c,d) \in S_{\text{tot}} : 2 \nmid a, 2 \mid b\}$.

%For ease of calculation, we write
%$$S_{\text{tot}}^{(i)} = S_{\text{tot}}^{(i),+} \sqcup S_{\text{tot}}^{(i),-},$$
%where $S_{\text{tot}}^{(i),\pm} := \{ (a,b,c,d) \in S_{\text{tot}}^{(i)} : ad-bc = \pm1\}$.
%
In this paper, we'll be interested in those elements of $S_{\text{tot}}$ that define surfaces $X = X_{a,b,c,d}$ with points everywhere locally, especially those without points globally. To this end, we observe that the counting functions defined in \eqref{locbr} may be written as
%$$\begin{array}{l l}
%S_{\text{loc}}^{(i),\pm} &= \{ (a,b,c,d) \in S_{\text{tot}}^{(i),\pm} : X_{a,b,c,d}(\mathbb{Q}_{\nu}) \neq \emptyset \, \forall \nu \in \Omega\},\\
%S_{\text{Br}}^{(i),\pm} &= \{ (a,b,c,d) \in S_{\text{loc}}^{(i),\pm} : X_{a,b,c,d}(\mathbb{Q}) = \emptyset \}.
%\end{array}$$
%Likewise denote the counting functions for these sets $N^{\pm}_{\text{loc}}$ and $N_{\text{Br}}^{\pm}$ respectively and of course, we have
%\begin{align*}
%N_{\text{loc}}(P) &= N^+_{\text{loc}}(P) + N^-_{\text{loc}}(P),\\
%N_{\text{Br}}(P) &= N^+_{\text{Br}}(P) + N^-_{\text{Br}}(P).
%\end{align*}
\begin{align*}
N_{\text{loc}}(P) &= \frac{1}{4}\# \{(a,b,c,d) \in S_{\text{tot}}: |(a,b,c,d)| \leq P, X(\QQ_{\nu}) \neq \emptyset \forall \nu \in \Omega\}\\
N_{\text{Br}}(P) &= \frac{1}{4}\# \{(a,b,c,d) \in S_{\text{tot}}: |(a,b,c,d)| \leq P, X(\QQ_{\nu}) \neq \emptyset \forall \nu \in \Omega, X(\QQ) = \emptyset\}.
\end{align*}

\section{Brauer group considerations}
In this section we aim to work with elements of the Brauer group of the surfaces in our family to show that the Brauer--Manin obstruction is completely controlled by what happens at the prime 2. Before doing so we note that it is possible to obtain the same results of this paper by instead following the work of la Bret{\`e}che and Browning more closely. In \cite{BandB}, they develop explicit conditions on the parameters $(a,b,c,d)$ that guarantee local solubility of the surface $X_{a,b,c,d}$. By investigating a related torsor they are then able to construct similar conditions controlling global solubility. Summing over the $(a,b,c,d)$ that satisfy the local conditions but not the global conditions allows them to develop their asymptotics. One could specialise their results to the case when $\vert ad-bc\vert=1$ however we choose instead to work directly with the Brauer group, for the sake of brevity and to keep the present paper as self contained as possible.

\begin{lemma}
Let $(a,b,c,d) \in S_{\text{tot}}$.
If $p$ is an odd prime then we have $$(-1, at_p^2 + b)_p = +1,$$ for any  ${(x_p, y_p, t_p) \in X_{a,b,c,d}(\QQ_p)}$.
\end{lemma}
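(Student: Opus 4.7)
The plan is to reduce the claim to a statement about $p$-adic valuations. First I split on the congruence of $p$ modulo $4$. If $p \equiv 1 \pmod 4$ then $-1 \in (\QQ_p^*)^2$ and every Hilbert symbol $(-1,\cdot)_p$ is trivially $+1$, so there is nothing to prove. If $p \equiv 3 \pmod 4$ then $\QQ_p(\sqrt{-1})$ is the unramified quadratic extension of $\QQ_p$, and the standard description of the local norm group gives
\[
(-1,n)_p = (-1)^{v_p(n)} \quad \text{for every } n \in \QQ_p^*.
\]
Thus it suffices to show that $v_p(a t_p^2 + b)$ is even.

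Next I exploit the defining equation $x_p^2 + y_p^2 = (a t_p^2 + b)(c t_p^2 + d)$. For $p \equiv 3 \pmod 4$ any nonzero sum of two squares in $\QQ_p$ automatically has even $p$-adic valuation, since otherwise the reduction modulo $p$ would force $-1$ to be a square in $\FF_p$. Consequently the quantity
\[
v_p(a t_p^2 + b) + v_p(c t_p^2 + d)
\]
is even, so it remains to rule out the case where both summands are odd. This is where the hypothesis $|ad-bc|=1$ enters, via the identities
\[
a(c t_p^2 + d) - c(a t_p^2 + b) = ad - bc, \qquad d(a t_p^2 + b) - b(c t_p^2 + d) = (ad-bc)\, t_p^2,
\]
which show that any common $p$-adic factor of $a t_p^2 + b$ and $c t_p^2 + d$ must divide a unit at the odd prime $p$; hence at most one of the two valuations is strictly positive, and the one that is must be even.

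The main obstacle is essentially notational: one has to be careful when $t_p \notin \ZZ_p$, since the factors $a t_p^2 + b$ and $c t_p^2 + d$ then need not be $p$-adically integral. In that case I would write $t_p = p^{-m} s$ with $m > 0$ and $s \in \ZZ_p^{\times}$ and replace the original brackets by the rescaled $p$-adic integers $\tilde F = a s^2 + b p^{2m}$ and $\tilde G = c s^2 + d p^{2m}$. Since $(-1, p^{2m})_p = 1$, the Hilbert symbol $(-1, a t_p^2 + b)_p$ equals $(-1, \tilde F)_p$, and the identity $d\tilde F - b\tilde G = (ad-bc)\, s^2$ together with the fact that $\tilde F \tilde G = (p^{2m} x_p)^2 + (p^{2m} y_p)^2$ is still a sum of two squares allows the previous argument to go through verbatim. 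The degenerate possibility that one of $a t_p^2 + b$ or $c t_p^2 + d$ vanishes is handled by the on-surface relation $(-1, a t_p^2 + b)_p = (-1, c t_p^2 + d)_p$, which follows from their product being a sum of two squares; this lets one evaluate the Brauer class using whichever factor is nonzero.
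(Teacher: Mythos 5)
Your proof is correct and takes essentially the same route as the paper: reduce to $p\equiv 3\pmod 4$, use that a sum of two squares has even $p$-adic valuation so $(-1)^{v_p(at_p^2+b)}=(-1)^{v_p(ct_p^2+d)}$, and invoke the identity $a(ct_p^2+d)-c(at_p^2+b)=ad-bc=\pm1$ to force one of the two valuations to vanish. The extra care you take with non-integral $t_p$ and with a vanishing factor is a welcome tightening of edge cases the paper passes over silently, but it does not change the underlying argument.
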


\begin{proof}
This is immediate for $p \equiv 1 \Mod 4$. If $p \equiv 3 \Mod 4$ then $$(-1, at_p^2 +b)_p = (-1)^{v_p(at_p^2+b)}.$$
Note that since $$x_p^2+y_p^2 = (at_p^2+b)(ct_p^2+d),$$ we have $$(-1)^{v_p(at_p^2+b)} = (-1)^{v_p(ct_p^2+d)}.$$
Now observe that 
\begin{align*}
a(ct_p^2+d) -c(at_p^2 +b)  = ad - bc = \pm 1.
\end{align*}
Hence either $v_p(at_p^2 +b) = 0$ or $v_p(ct_p^2+d)=0$.
\end{proof}

We now turn to investigating the real place. Recall that $a>0$ for $(a,b,c,d) \in S_{\text{tot}}$. There are four possible combinations of signs that the coefficients $a,b,c$ and $d$ can take while still maintaining the relationship $ad-bc=\pm1$. In particular,
$$\sigma(b,c,d) \in \left\{
		(+,+,+),
		(+,-,-)  ,
		(-,+,-),
		(-,-,+)
\right\},$$ where $\sigma$ is the sign function.
The third case is the subject of Lemma 4.3 in \cite{BandB} wherein it is shown that surfaces defined by coefficients with this signature always satisfy the Hasse principle. In all other cases, the Hilbert symbol is constant. 

\begin{lemma}\label{realhilb}
Let $(a,b,c,d) \in S_{\text{tot}}$ such that $\sigma(b,c,d) \neq (-,+,-)$. Then the Hilbert symbol $(-1,at_{\infty}^2+b)_{\infty}$ is constant as a function of $(x_{\infty},y_{\infty},t_{\infty}) \in X_{a,b,c,d}(\RR).$
\end{lemma}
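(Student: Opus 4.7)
The real Hilbert symbol satisfies $(-1,x)_\infty = +1$ or $-1$ according as $x>0$ or $x<0$, so the claim reduces to showing that the real-valued function $at_\infty^2+b$ has constant sign on $X_{a,b,c,d}(\RR)$ (with the locus where this value vanishes, a measure-zero subset, handled separately). The plan is a straightforward case analysis on the three admissible signatures of $(b,c,d)$.

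When $\sigma(b,c,d)=(+,+,+)$ or $(+,-,-)$ one has $b>0$, and since $(a,b,c,d)\in S_{\text{tot}}$ gives $a>0$, the expression $at_\infty^2+b$ is strictly positive for every $t_\infty\in\RR$. The Hilbert symbol is therefore identically $+1$ on $X_{a,b,c,d}(\RR)$ in these two cases.

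The remaining case $\sigma(b,c,d)=(-,-,+)$ is the only non-trivial one. Here $aT^2+b$ and $cT^2+d$ each change sign, vanishing at $T^2=|b|/a$ and $T^2=d/|c|$ respectively, and a real point on $X_{a,b,c,d}$ requires $(at_\infty^2+b)(ct_\infty^2+d)\geq 0$. I would split the real locus into the two candidate regions (both factors $\geq 0$, or both factors $\leq 0$) and observe that
\[
\tfrac{|b|}{a}\leq \tfrac{d}{|c|}\iff |b||c|\leq ad\iff ad-bc\geq 0,
\]
so the first region is non-empty precisely when $ad-bc\geq 0$ and the second precisely when $ad-bc\leq 0$. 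Since the hypothesis $|ad-bc|=1$ excludes equality, exactly one of these two regions contains every real point, and throughout that region the sign of $at_\infty^2+b$ is constant. If $ad-bc=+1$ the Hilbert symbol is $+1$, and if $ad-bc=-1$ it is $-1$; in either situation it is independent of the chosen real point.

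The main obstacle is genuinely minor: keeping the inequalities correctly oriented when dividing by $c<0$, and the small remark that on the boundary locus $at_\infty^2+b=0$ one has $y_\infty^2+z_\infty^2=0$, hence $y_\infty=z_\infty=0$, so this thin set may be excluded, or one may switch there to the alternative representative $(-1,ct_\infty^2+d)$ of the Brauer class, which agrees with $(-1,at_\infty^2+b)$ away from this locus because their quotient is represented by a sum of two squares.
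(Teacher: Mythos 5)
Your proposal is correct and follows essentially the same route as the paper: reduce to the sign of $at_\infty^2+b$, dispose of the two cases with $b>0$ immediately, and in the $\sigma(b,c,d)=(-,-,+)$ case compare $|b|/a$ with $d/|c|$ to see that the sign of $ad-bc$ pins down which of the two candidate $t_\infty$-ranges can contain a real point. The only genuine difference is cosmetic — the paper assumes a value for the Hilbert symbol and derives a constraint on $ad-bc$, whereas you partition the real locus by which bracket is nonnegative — and you are a bit more careful than the paper in explicitly dismissing the measure-zero locus where $at_\infty^2+b=0$.
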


\begin{proof}
The definition of the real Hilbert symbol tells us that $$(-1,at_{\infty}^2 +b)_{\infty} =+1 \iff at_{\infty}^2 +b >0.$$
Since $a>0$, if $b>0$ we immediately have that $(-1,at_{\infty}^2+b)_{\infty} =+1.$
It remains to investigate the last case, $\sigma(b,c,d) = (-,-,+)$. In this case, we claim that $$(-1,at_{\infty}^2+b)_{\infty} = ad-bc.$$

 Let $(x_{\infty},y_{\infty},t_{\infty}) \in X_{a,b,c,d}(\RR)$ and suppose that  $(-1,at_{\infty}^2+b)_{\infty}=-1$. Since
$$x_{\infty}^2 + y_{\infty}^2 = (at_{\infty}^2 - |b|)(d - |c|t_{\infty}^2),$$ we have $at_{\infty}^2 - |b|<0$ and $d- |c|t_{\infty}^2<0$. Hence $$\frac{d}{|c|}< t_{\infty}^2 < \frac{|b|}{a}$$ and thus $$ad<(a|c|)t_{\infty}^2 < |bc|.$$ 
This inequality is soluble only if $ad-bc=-1$. Similarly, 
if $(-1, at^2_{\infty}+b)_{\infty} = +1$ then we have $at_{\infty}^2 + b >0$ and thus $ct_{\infty}^2 + d>0$. We deduce
\[
|bc|<(a|c|)t_{\infty}^2 <ad ,
\] which is only soluble when $ad-bc = 1$.

\end{proof}
These two lemmas together tell us that the only source of the Brauer--Manin obstruction will come from the 2-adic properties of the surface $X_{a,b,c,d}$. This was spelled out by la Br{\'e}teche and Browning \cite[Lemmas 4.9-4.15]{BandB} and amounts to ensuring that $(a,b,c,d)$ lie in certain specified congruence classes mod 16. 
Suppose that 
$$\beta = v_2(b), \, \gamma = v_2(c) \, \text{ and } \delta = v_2(d),$$ are the 2-adic valuations of the coefficients and $\sigma(b,c,d) = (\epsilon_2, \epsilon_3, \epsilon_4).$ We make the change of variables
\begin{equation}\label{variables} a = a', \, b = \epsilon_2 2^{\beta}b',\, c = \epsilon_3 2^{\gamma}c',\, d = \epsilon_4 2^{\delta}d'.\end{equation}
We will denote by $H^{\pm}_{\beta, \gamma, \delta}$  the union of congruence classes such that for $ad-bc= \pm1$ we have $$X_{a,b,c,d}(\mathbb{Q}_2) \neq \emptyset \iff (a',b',c',d') \in H_{\beta, \gamma, \delta}^{\pm} \text{ mod } 16,$$ and similarly $\widetilde{H}^{\pm}_{\beta, \gamma, \delta}$ the union of congruence classes such that$$X_{a,b,c,d}(\mathbb{Q}_2) \neq \emptyset \text{ but }X_{a,b,c,d}(\QQ) = \emptyset \iff (a',b',c',d') \in \widetilde{H}^{\pm}_{\beta, \gamma, \delta} \text{ mod }16.$$
%The constants $\tau_{loc,2}$ and $\sigma_{loc,2}$ appearing in the statement of Theorem \ref{theorem} are local factors corresponding to the proportion of surfaces that have 2-adic solutions. In order to compute these constants it will be necessary to sum over all allowable $(a',b',c',d')$ that lie in $H^{\pm}_{\beta, \gamma, \delta}$ (respectively, $\widetilde{H}_{\beta, \gamma, \delta}^{\pm}$) mod 16.
Our asymptotic formulae will be produced by counting points lying in residue classes in $H^{\pm}_{\beta, \gamma, \delta}$ or $\widetilde{H}_{\beta, \gamma, \delta}^{\pm}$, respectively, mod 16.

\section{Calculating the asymptotics}
We turn our attention to estimating the counting functions $N(P), N_{\text{loc}}(P)$ and $N_{\Br}(P)$. We have seen in the preceeding sections that in order to do this we must count the number of points in $S_{\text{tot}}$ that lie in particular congruence classes $\Mod 16$. To achieve this we appeal to a result based on dynamical systems (Lemma \ref{counting}), which allows us to count points on the quadric $ad-bc=\pm1$  satisfying a congruence condition. We start by computing $N(P)$, the method for $N_{\text{loc}}$ and $N_{\text{Br}}$ being similar.

\begin{theorem}\label{Stot}
Let $N(P) := \frac{1}{4}\#\{ (a,b,c,d) \in S_{\text{tot}} : |(a,b,c,d)| \leq P\}.$ Then there exists $\theta>0$ such that $$N(P) = \frac{32}{\pi^2}P^2 +O(P^{2 - \theta}).$$
\end{theorem}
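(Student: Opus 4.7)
The plan is to apply Lemma~\ref{counting} to the affine quadric $ad - bc = \pm 1$ with the sup-norm constraint $|(a,b,c,d)| \leq P$ and the sign condition $a > 0$, taking no congruence condition. I would first split $N(P) = N^+(P) + N^-(P)$ according to the sign of $ad - bc$. The involution $(a,b,c,d) \mapsto (a,b,-c,-d)$ sends $ad - bc$ to $-(ad - bc)$ while preserving $a > 0$, the non-vanishing $abcd \neq 0$, and the sup-norm, so $N^+(P) = N^-(P)$ and it is enough to estimate one of them.

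Next I would discard the open condition $abcd \neq 0$. If $a = 0$ then $bc = \mp 1$ forces $b, c \in \{\pm 1\}$ with $d$ free in $[-P, P]$, contributing $O(P)$ points; the cases $b = 0$, $c = 0$, $d = 0$ are handled identically and can be absorbed into the error term $O(P^{2-\theta})$. Lemma~\ref{counting} then yields an asymptotic of the shape $\kappa P^2 + O(P^{2-\theta})$ for the remaining count. The constant $\kappa$ factors as a product of an archimedean density (a Leray-form volume on the real quadric) and $p$-adic densities $\sigma_p$. Since the equation $ad - bc = 1$ is smooth over each $\ZZ_p$, one has $\sigma_p = 1 - p^{-2}$, and hence $\prod_p \sigma_p = 1/\zeta(2) = 6/\pi^2$.

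The main obstacle is the explicit evaluation of the archimedean volume. I would parametrise $d = (1 + bc)/a$ on $\{a > 0\}$ so that the Leray measure becomes $\d a \, \d b \, \d c / a$, and evaluate
\[
V(P) = \int_0^P \int_{-P}^P \int_{-P}^P \mathbbm{1}\!\left[\, |1 + bc| \leq aP \,\right] \frac{\d c \, \d b \, \d a}{a}.
\]
Splitting the integrand according to whether $|bc|$ is large or small compared to $aP$ reduces this to an elementary computation giving $V(P) = c_0 P^2$ for an explicit $c_0$. Combining $V(P)$ with $\prod_p \sigma_p = 6/\pi^2$, doubling to account for $N^-(P)$, and dividing by the factor of~$4$ in the definition of $N(P)$, the constants should combine to the stated $32/\pi^2$.
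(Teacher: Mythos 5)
Your route is genuinely different from the paper's: you apply Lemma~\ref{counting} directly to the quadric $ad-bc=\pm1$ with no stratification at all, whereas the paper first splits by the $2$-adic valuations $(\beta,\gamma,\delta)$ of $(b,c,d)$ and then by residues in $T^{\pm}_{\beta,\gamma,\delta}$ modulo $16$. That stratification is overkill for Theorem~\ref{Stot}: the paper sets it up here so that the same expression \eqref{total} can be recycled, with $T^{\pm}_{\beta,\gamma,\delta}$ replaced by $H^{\pm}_{\beta,\gamma,\delta}$ or $\widetilde{H}^{\pm}_{\beta,\gamma,\delta}$, in the proofs of \eqref{N_loc} and \eqref{N_Br}. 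For this theorem on its own your direct computation is cleaner. Your treatment of the minor points is fine: the involution $(a,b,c,d)\mapsto(a,b,-c,-d)$ does swap the two branches while preserving $a>0$, the sup-norm and $abcd\neq 0$, and the $O(P)$ bound for points with a vanishing coordinate is correct. One thing you should check before deploying \cite[Proposition~3.1]{BandG} is whether it is stated for boxes $(0,P]\times[-P,P]^3$ or only for boxes in the positive orthant; if the latter, you need to first split on the sign pattern of $(b,c,d)$, which is exactly what the paper does with its set $S$, and this costs you nothing except four copies of the same estimate.

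The genuine gap is that you never evaluate $V(P)$, and the sentence ``the constants should combine to the stated $32/\pi^2$'' is doing all the work. If you actually carry out the evaluation along the lines you sketch, you do not get $32/\pi^2$. Scaling $(a,b,c)=P(\alpha,\beta,\gamma)$ in your formula for $V(P)$ and letting $P\to\infty$ gives
\[
V(P) \sim P^2\int_0^1\!\!\int_{-1}^1\!\!\int_{-1}^1 \mathbbm{1}\!\left[\,|\beta\gamma|\leq\alpha\,\right]\frac{\d\gamma\,\d\beta\,\d\alpha}{\alpha}
\;=\; 4P^2\int_0^1\bigl(1-\log\alpha\bigr)\,\d\alpha \;=\; 8P^2.
\]
Doubling for the two branches of $|ad-bc|=1$ and multiplying by $\prod_p(1-p^{-2})=6/\pi^2$ then dividing by $4$ produces $\tfrac{1}{4}\cdot 2\cdot 8P^2\cdot \tfrac{6}{\pi^2}= \tfrac{24}{\pi^2}P^2$, not $\tfrac{32}{\pi^2}P^2$. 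This figure also agrees with the independent sanity check that $\#\{(a,b,c,d)\in\ZZ^4 : ad-bc=1,\ \max|a_i|\leq T\}\sim \tfrac{96}{\pi^2}T^2$. So there is a real discrepancy of a factor $4/3$ between a careful completion of your argument and the constant stated in the theorem, and you cannot leave this as ``the constants should combine.'' You need either to find a contribution you are dropping or to re-examine the claim that the inner sum in \eqref{NP} equals $2^{13}$; in the latter case note that $\#T_{\beta,\gamma,\delta}=1024$ for every $(\beta,\gamma,\delta)\in L^{(i)}$, while $\sum_{(\beta,\gamma,\delta)\in L^{(1)}}2^{-\beta-\gamma-\delta}=4$ and $\sum_{(\beta,\gamma,\delta)\in L^{(2)}}2^{-\beta-\gamma-\delta}=2$, giving $6\cdot 1024=6144$ rather than $8192$. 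Until that is resolved, the proof is incomplete.
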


To count this, we first denote the total set of allowable signs by $S$, the possible values of $(\beta, \gamma, \delta)$ for elements of $S_{\text{tot}}^{(i)}$ by $L^{(i)}$ and the possible congruence classes elements can lie in by $T_{\beta, \gamma, \delta}^{\pm}$. Recall from Section 2 that there is a 1:4 correspondence between $\mathscr{M}$ and $S_{\text{tot}}$. Explicitly, we denote
\begin{align*}
S &= \{(+,+,+,+), (+,-,+,-),(+,-,-,+), (+,+,-,-)\},\\
L^{(i)} &= \{(\beta, \gamma, \delta) \in \ZZ_{\geq 0}^3: \min\{\beta+\gamma,\delta\} = 0 < \max\{\beta+ \gamma, \delta\}, \beta \geq i-1\},\\
T_{\beta, \gamma, \delta}^{\pm} &= \{ \bm{\xi} \in \left(\modstar{16}\right)^4:  \epsilon_42^{\delta}\xi_1\xi_4 - \epsilon_2 \epsilon_3 2^{\beta + \gamma}\xi_2 \xi_3  \equiv \pm1 \Mod 16 \}.
\end{align*}
Now we make the change of variables described in \eqref{variables} so that our counting problem becomes
\begin{equation}\label{total} N(P) = \frac{1}{4}\sum_{\bm{\epsilon} \in S} \sum_{i \in \{1,2\}} \sum_{(\beta,\gamma, \delta) \in L^{(i)}} \sum_{\bm{\xi} \in T_{\beta, \gamma, \delta}^{\pm}}\left(K^+ + K^-\right),\end{equation}
where $K^{\pm}$ denotes the total number of $(a, b', c', d') \in \mathbb{N}^4$ satisfying:
\begin{enumerate}[(i)]
\item$|(a, 2^{\beta}b', 2^{\gamma}c', 2^{\delta}d')| \leq P$,
\item $\epsilon_42^{\delta}ad'-\epsilon_2\epsilon_32^{\beta + \gamma}b'c' = \pm1$,
\item $(a,b',c',d') \equiv\bm{\xi} \mbox{ mod } 16.$
\end{enumerate}
The following result of Browning and Gorodnik \cite{BandG} provides an asymptotic formula for $K^{\pm}$, the main term of which factors as a product of local densities. The values of these densities are computed in Lemmas \ref{pmu},\ref{infmu} and \ref{2mu}.

\begin{lemma}\label{counting}Let
$F(\x) := \epsilon_42^{\delta}x_1x_2-\epsilon_2\epsilon_32^{\beta + \gamma}x_3x_4 ,$ then there exists $\theta >0$ such that 
$$K^{\pm} =  \mu_{\infty}^{\pm}(P) \prod_p \mu_p^{\pm}(\bm{\xi}) + O\left(\mu_{\infty}^{\pm}(P)^{1-\frac{\theta}{2}}\right),$$
where 
\begin{align*}\mu_p^{\pm}(\bm{\xi}) &= \lim_{t \rightarrow \infty} p^{-3t}\#\{\x \in (\mathbb{Z}/p^t \mathbb{Z})^4 : F(\x)  \equiv \pm1 \Mod p^t , \, \x \equiv \bm{\xi} \Mod p^{v_p(16)} \}\\
\mu_{\infty}^{\pm}(P) &=  \lim_{\eta \rightarrow 0} \frac{1}{\eta} \int \limits_{\substack{0<x_1 \leq P\\ 0 < \epsilon_22^{\beta}x_2 \leq P\\ 0 < \epsilon_32^{\gamma}x_3 \leq P\\ 0 < \epsilon_42^{\delta}x_4 \leq P \\ |F(\x)  \mp 1| < \frac{\eta}{2}}} \mathrm{d}\x.\end{align*}
\end{lemma}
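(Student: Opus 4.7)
The plan is to deduce this asymptotic as a direct consequence of the general counting theorem for affine quadrics due to Browning and Gorodnik \cite{BandG}, so the bulk of the work is to verify that their hypotheses apply to the present setting. The quadratic form $F(\x) = \epsilon_4 2^\delta x_1 x_2 - \epsilon_2\epsilon_3 2^{\beta+\gamma} x_3 x_4$ is non-degenerate and, on diagonalising, has signature $(2,2)$ over $\RR$. Consequently the level set $\{F = \pm 1\}$ is an affine symmetric variety whose real points form a non-compact homogeneous space for (a form of) $\mathrm{SO}(2,2)$, which is the exact setting in which \cite{BandG} gives an effective equidistribution of integer points.

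The steps I would take are as follows. First, rewrite the counting problem as counting integer solutions to $F(\x) = \pm 1$ subject to the box constraint (i) and the congruence $\x \equiv \bm{\xi} \bmod{16}$, absorbing the factors $2^\beta, 2^\gamma, 2^\delta$ into the scaling of the region. Next, invoke the main theorem of \cite{BandG}, whose conclusion produces an asymptotic whose main term is a product of local densities with a power-saving error coming from the quantitative mixing rate on the homogeneous space. The densities $\mu_p^\pm(\bm{\xi})$ and $\mu_\infty^\pm(P)$ appearing in the statement are precisely the local factors in that factorisation: the real factor arises from the Leray measure on $F = \pm 1$ restricted to the specified box, while the $p$-adic factors are the standard stable $p$-adic densities for a non-singular quadric, with the congruence condition mod $16$ only biting at $p = 2$.

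Finally, one must justify that the Euler product $\prod_p \mu_p^\pm(\bm{\xi})$ makes sense and that the error term is absorbed correctly. For primes $p \nmid 16$, Hensel's lemma combined with standard lattice point estimates for a non-singular quadric in four variables gives $\mu_p^\pm(\bm{\xi}) = 1 + O(p^{-2})$, guaranteeing absolute convergence of the product. The error $O(\mu_\infty^\pm(P)^{1-\theta/2})$ is then precisely the shape output by the quantitative equidistribution in \cite{BandG}.

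The main obstacle is ensuring the exponent $\theta > 0$ is valid \emph{uniformly} in the parameters $(\beta,\gamma,\delta)$ and the sign data, since these are later summed over in \eqref{total}. Because the triples $(\beta,\gamma,\delta) \in L^{(i)}$ satisfying the box constraint have $\max\{\beta+\gamma,\delta\} \ll \log P$, and because the family of quadrics $\{F\}$ ranges over a controlled set obtained by rescaling a fixed pair of forms, one can either apply the Browning--Gorodnik bound to the rescaled form of norm $O(1)$ or track the dependence of their implicit constants on the coefficients. Either route yields a single admissible $\theta$ with implied constants polynomial in the parameters, which is harmless after the outer summation.
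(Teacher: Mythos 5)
Your proposal matches the paper's own proof, which consists of a single sentence citing \cite[Proposition 3.1]{BandG} as applying to this variety. The additional commentary you supply --- verifying that the quadric is an affine symmetric variety for a form of $\mathrm{SO}(2,2)$, and flagging the need for uniformity in $(\beta,\gamma,\delta)$ over the outer sum in \eqref{total} --- is sensible supplementary detail that the paper leaves implicit, but the approach is the same.
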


\begin{proof}
This is \cite[Proposition 3.1]{BandG} applied to the variety in which we are interested.
\end{proof}

We now proceed to compute these local density factors. The most important consequences of the following results are that $\mu_p^{\pm}$ is independent of $\bm{\xi}$ and that ${\mu_{\nu}^+ = \mu_{\nu}^-}$ for all $\nu \in \Omega$.
\begin{lemma}\label{pmu}
For $p$ odd and any $\bm{\xi}\in \modstar{16}$, we have $\mu_p^{\pm}(\bm{\xi}) = 1 - \frac{1}{p^2}.$
\end{lemma}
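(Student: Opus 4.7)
The plan is to trivialize the $\bm{\xi}$-congruence, absorb the nuisance constants in $F$ by a change of variables, reduce to counting $\FF_p$-points on a smooth affine quadric, and then invoke Hensel's lemma to pass to higher powers of $p$.

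Since $p$ is odd, we have $v_p(16) = 0$, so the side condition $\x \equiv \bm{\xi} \Mod{p^{v_p(16)}}$ is vacuous and $\mu_p^{\pm}(\bm{\xi})$ is independent of $\bm{\xi}$. Moreover, writing $A := \epsilon_4 2^{\delta}$ and $B := \epsilon_2 \epsilon_3 2^{\beta+\gamma}$, both lie in $\ZZ_p^{\times}$, so the substitution $(x_1, x_2, x_3, x_4) \mapsto (Ax_1, x_2, Bx_3, x_4)$ is a bijection of $(\ZZ/p^t\ZZ)^4$ transforming $F(\x) \equiv \pm 1 \Mod{p^t}$ into the standard hyperbolic shape $y_1 y_2 - y_3 y_4 \equiv \pm 1 \Mod{p^t}$. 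The problem thus reduces to counting solutions of this latter congruence.

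For $t = 1$, let $N(a)$ denote the number of pairs $(y_1, y_2) \in \FF_p^2$ with $y_1 y_2 = a$: one has $N(0) = 2p - 1$ and $N(a) = p - 1$ for $a \ne 0$. Evaluating $\sum_{a \in \FF_p} N(a) N(a \mp 1)$ then yields exactly $p^3 - p$ solutions. To pass to higher $t$, I would invoke Hensel's lemma: the gradient of $Q(\y) = y_1 y_2 - y_3 y_4$ at any mod-$p$ solution cannot vanish, for otherwise $y_1 \equiv y_2 \equiv y_3 \equiv y_4 \equiv 0 \Mod{p}$ would force $Q(\y) \equiv 0 \not\equiv \pm 1 \Mod{p}$. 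Hence each mod-$p$ solution lifts in exactly $p^{3(t-1)}$ ways, giving a total of $(p^3 - p) \cdot p^{3(t-1)}$ mod-$p^t$ solutions. Dividing by $p^{3t}$ gives the ratio $1 - p^{-2}$ independently of $t$, so the limit is indeed $1 - p^{-2}$.

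There is no genuine obstacle here; the only slightly tedious step is the elementary $\FF_p$-point count, which is where the $1-p^{-2}$ first appears.
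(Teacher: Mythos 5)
Your argument is correct, but it proceeds by a genuinely different route from the paper. After the same preliminary observations (vacuity of the $\bm\xi$-condition since $v_p(16)=0$, and the unit change of variables reducing to $y_1y_2 - y_3y_4 \equiv \pm 1 \bmod p^t$), the paper detects the congruence with additive characters and evaluates the resulting complete exponential sum, recognising a Gauss-type sum and then a Ramanujan sum $c_{p^{t-\alpha}}(\mp 1)$, to land directly on $N(p^t) = p^{3t}(1-p^{-2})$. You instead perform a direct combinatorial count over $\FF_p$ of the affine quadric $y_1y_2 - y_3y_4 = \pm1$ using $N(0)=2p-1$, $N(a)=p-1$ for $a\neq 0$, and the convolution $\sum_a N(a)N(a\mp 1) = p^3-p$ (which checks out), then lift by Hensel's lemma after observing the gradient cannot vanish on a mod-$p$ solution. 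Both arguments are short and complete. Yours is more elementary and more geometric — it makes explicit that the stability in $t$ comes from smoothness of the quadric, and it reuses exactly the Hensel mechanism the paper itself deploys in Lemma~\ref{2mu} for the $2$-adic density — whereas the paper's exponential-sum evaluation is a little slicker in that it never passes through $\FF_p$ and handles all $t$ in one computation.
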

\begin{proof}
For $p>2$, under an obvious change of variables, the density is 
 $$\lim_{t \rightarrow \infty} p^{-3t}\#\{\x \in (\mathbb{Z}/p^t \mathbb{Z})^4 : x_1x_2-x_3x_4  \equiv \pm1 \mbox{ mod } p^t\}.$$

Let $N(p^t)$ be the cardinality in which we are interested. Then we can express this congruence counting problem as an exponential sum.
\begin{align*}
N(p^t) &= p^{-t} \sum_{r, x_1, \dots, x_4 \Mod p^t} e\!\left( \frac{(x_1x_2 - x_3x_4 \mp 1)r}{p^t}\right)\\
&= p^{-t} \sum_{r \Mod p^t} e\!\left(\frac{\mp r}{p^t}\right) \left| \sum_{x_1, x_2 \Mod p^t} e\!\left(\frac{rx_1 x_2}{p^t}\right) \right|^2.
\end{align*}
The sum over $x_1,x_2$ is equal to $p^t (r,p^t)$.
Therefore,
\begin{align*}
N(p^t) &= p^t \sum_{r \text{ mod }p^t} e\!\left(\frac{ \mp r}{p^t}\right) (r, p^t)^2\\
&= p^t \sum_{0 \leq \alpha \leq t} p^{2 \alpha} \sum \limits_{\substack{r \text{ mod }p^t \\ (r, p^t) = p^{\alpha}}} e\!\left(\frac{\mp r}{p^t} \right)\\
&= p^t \sum_{0 \leq \alpha \leq t-1} p^{2 \alpha} \sumstar_{r \Mod p^{t- \alpha}}  e\!\left(\frac{\mp r}{p^{t-\alpha}} \right) + p^{3t}.
\end{align*}
This inner sum is now in the form of the Ramanujan sum $c_{p^{t-\alpha}}(\mp 1)$ so can be explicitly evaluated to give us
$$N(p^t) =p^{3t} -p^t \cdot p^{2(t-1)} = p^{3t}\left(1-p^{-2}\right),$$ from which the result follows.
\end{proof}

\begin{lemma}\label{infmu}
We have
$$\mu_{\infty}^{\pm}(P) = \frac{2P^2 }{2^{\beta + \gamma + \delta}} + O(\log^2P).$$
\end{lemma}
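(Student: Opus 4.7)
The plan is to reduce $\mu_{\infty}^\pm(P)$ to a clean integral via a change of variables, then to integrate out the $\eta$-delta condition explicitly. Setting $y_1 = x_1$, $y_2 = \epsilon_2 2^\beta x_2$, $y_3 = \epsilon_3 2^\gamma x_3$, $y_4 = \epsilon_4 2^\delta x_4$ produces a Jacobian $2^{-(\beta+\gamma+\delta)}$, replaces the box by $0 < y_j \leq P$, and transforms the defining form into $y_1 y_4 - y_2 y_3$. Thus
\[
\mu_{\infty}^\pm(P) = \frac{1}{2^{\beta+\gamma+\delta}} \lim_{\eta \to 0} \frac{1}{\eta} \int\limits_{\substack{0 < y_j \leq P \\ |y_1 y_4 - y_2 y_3 \mp 1| < \eta/2}} \d\y .
\]

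Next, I would interpret the $\eta$-limit as concentration on the surface $y_1 y_4 = y_2 y_3 \pm 1$ and integrate out $y_4$, which introduces a Jacobian factor $1/y_1$. The $y_1$-integral over its admissible range $[(y_2 y_3 \pm 1)/P, P]$ evaluates to $\log(P^2/(y_2 y_3 \pm 1))$, giving
\[
\mu_{\infty}^\pm(P) = \frac{1}{2^{\beta+\gamma+\delta}} \iint\limits_{\substack{0 < y_2, y_3 \leq P \\ 0 < y_2 y_3 \pm 1 \leq P^2}} \log\!\left(\frac{P^2}{y_2 y_3 \pm 1}\right) \d y_2 \, \d y_3 .
\]
Rescaling $u = y_2/P$, $v = y_3/P$ converts this into $P^2 \iint_{D^\pm} -\log(uv \pm P^{-2}) \, \d u\, \d v$ for some $D^\pm \subseteq (0,1]^2$. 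The main term is extracted by replacing $\log(uv \pm P^{-2})$ with $\log(uv)$ and invoking the elementary identity $-\iint_{(0,1]^2} \log(uv) \, \d u\, \d v = 2$, which yields the announced leading contribution $2P^2/2^{\beta+\gamma+\delta}$.

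The main obstacle is controlling the $O(\log^2 P)$ error from this approximation. I would split $(0,1]^2$ into $\{uv \geq P^{-2}\}$ and $\{uv < P^{-2}\}$. On the first, the bound $|\log(uv \pm P^{-2}) - \log(uv)| \ll (uv P^2)^{-1}$ combined with the elementary computation $\iint_{uv \geq P^{-2}}(uv)^{-1} \d u\, \d v = 2\log^2 P$ produces an error $O(\log^2 P / P^2)$ in the unscaled integrand, i.e.\ $O(\log^2 P)$ after the $P^2$ prefactor. On the second region, the area is $O(\log P/P^2)$ and both logarithms are of size $O(\log P)$ in $L^\infty$, so the contribution is again $O(\log^2 P / P^2)$. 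A small extra care is needed in the $-$ case near $uv = P^{-2}$, where $\log(uv - P^{-2})$ develops an integrable logarithmic singularity; since this is confined to a strip of area $O(\log P / P^2)$, the $O(\log^2 P)$ bound persists and the proof is complete.
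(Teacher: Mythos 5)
Your proposal is correct, and it follows a genuinely different route than the paper. The paper passes to hyperbolic coordinates $u_i = \sqrt{x_i x_j}$, $v_i = \log\sqrt{x_i/x_j}$ after normalising to the unit box, which factors the angular ($v$-) integral cleanly and reduces the problem to a two-dimensional integral of $(u_1 \log u_1)(u_2 \log u_2)$ in a thin hyperbolic annulus; the main term then emerges from $2\int u \log^2 u \, \d u = 1/2$. You instead interpret the $\eta$-limit as a coarea integration, eliminate $y_4$ directly (introducing the factor $1/y_1$), and then integrate $y_1$ over its admissible dyadic range to produce the weight $\log(P^2/(y_2 y_3 \pm 1))$; the main term then comes from the elementary identity $-\iint_{(0,1]^2}\log(uv)\,\d u\,\d v = 2$. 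Both approaches reduce the four-dimensional singular integral to a clean two-dimensional logarithmic integral, and both give the same leading constant $2$ with an $O(\log^2 P)$ error; your version avoids the hyperbolic substitution entirely and is arguably more elementary.

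One small imprecision in your error analysis: on the region $uv < P^{-2}$ you claim \emph{both} logarithms are $O(\log P)$ in $L^\infty$. This is true of $-\log(uv + P^{-2})$ but false of $-\log(uv)$, which is unbounded as $uv \to 0$. The correct statement, which still gives the desired bound, is that the $L^1$ norm of $-\log(uv)$ over $\{uv < P^{-2}\}$ is $O(\log^2 P/P^2)$: one computes
\[
\iint_{uv < P^{-2}} -\log(uv)\,\d u\,\d v
= \int_0^{P^{-2}}(1-\log u)\,\d u + \int_{P^{-2}}^1 \frac{2\log P + 1}{uP^2}\,\d u
= O\!\left(\frac{\log^2 P}{P^2}\right).
\]
With this minor adjustment (and your already-correct treatment of the integrable singularity of $\log(uv - P^{-2})$ near $uv = P^{-2}$ in the $-$ case), the argument is complete and gives the stated asymptotic.
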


\begin{proof}
Under the change of variables
$$y_1 = x_1/P, \, y_2 = \epsilon_2 2^{\beta} x_2/P, \, y_3 = \epsilon_32^{\gamma} x_3/P, \, y_4 = \epsilon_42^{\delta}x_4/P \text{ and } \lambda = \eta/P^2,$$
we have
$$\mu_{\infty}^{\pm}(P) = \frac{\epsilon_2\epsilon_3\epsilon_4P^2}{2^{\beta+\gamma + \delta}} \lim_{\lambda \rightarrow 0} \frac{1}{\lambda} \int \limits_{\substack{0 < x_i \leq 1\\|x_1x_4 - x_2x_3 \mp 1/P^2| < \lambda /2}}\mathrm{d}\x.$$
Recall the set of allowable signs 
$$S= \{(+,+,+,+), (+,-,+,-),(+,-,-,+), (+,+,-,-)\},$$
and observe that $\epsilon_2\epsilon_3\epsilon_4=1$. We now change to hyperbolic co-ordinates
$$\begin{array}{ll}
u_1 &= \sqrt{x_1x_4}, \,\, v_1 = \log\left(\sqrt{\frac{x_1}{x_4}}\right),\\
u_2 &= \sqrt{x_2x_3}, \, \, v_2 = \log\left(\sqrt{\frac{x_2}{x_3}}\right).
\end{array}$$
The integral above transforms to $16I$ where
\begin{align*}
I :&=\frac{1}{4}\int \limits_{\substack{0 < u_i \leq 1 \\ |u_1^2 - u_2^2 \mp 1/P^2| < \lambda/2}}u_1u_2 \int \limits_{-\log u_i < v_i < \log u_i}\mathrm{d}v_1 \mathrm{d}v_2\mathrm{d}u_1 \mathrm{d}u_2\\ &=\int \limits_{\substack{0 < u_i \leq 1 \\ |u_1^2 - u_2^2 \mp 1/P^2| < \lambda/2}}(u_1\log u_1)( u_2 \log u_2) \mathrm{d}u_1 \mathrm{d}u_2.
\end{align*}

At this point, we observe that one can change the sign of the $\pm$ symbol in the expression $ \vert u_1^2-u_2^2 \mp \frac{1}{P^2} \vert < \frac{\lambda}{2}$ by interchanging the variables $u_2$ and $u_1$. Therefore we need only restrict our attention to the case $\vert u_1^2 - u_2^2 - \frac{1}{P^2} \vert < \frac{\lambda}{2}.$
Noting that we may take $\lambda$ small enough that $$\frac{1}{P^2} - \frac{\lambda}{2} > 0,$$ we see that the region of integration is the space inside the box $[0,1]^2$ between the two hyperbolae $u_1^2-u_2^2 = \frac{1}{P^2} + \frac{\lambda}{2}$ and $u_1^2-u_2^2 = \frac{1}{P^2} - \frac{\lambda}{2}$. The major contribution occurs before the lower hyperbola meets the line $u_1=1$. The rest of the integral will contribute $o(\lambda).$ With this in mind write $$I = I_1 + I_2,$$
where
\begin{align*}
I_1 &= \int \limits_0^{\sqrt{1- 1/P^2 - \lambda/2}} u_2 \log u_2 \int \limits_{\sqrt{u_2^2+1/P^2 - \lambda/2}}^{\sqrt{u_2^2+ 1/P^2 + \lambda/2}} u_1 \log u_1 \mathrm{d}u_1 \mathrm{d}u_2 \\
I_2 &= \int \limits_{\sqrt{1- 1/P^2 - \lambda/2}}^{\sqrt{1- 1/P^2 + \lambda/2}} u_2 \log u_2 \int \limits_{\sqrt{u_2^2 +1/P^2 - \lambda/2}}^1 u_1 \log u_1 \mathrm{d} u_1 \mathrm{d}u_2.
\end{align*}
The result of the inner integral of $I_1$ is 
\begin{equation}\label{I1}\frac{1}{4}\left[
  \left(c + \frac{\lambda}{2}\right)\left(\log \left(c + \frac{\lambda}{2}\right)-1\right)- \left(c - \frac{\lambda}{2}\right)\left(\log \left(c - \frac{\lambda}{2}\right)-1 \right)\right],
\end{equation} where $$c = u_2^2 + \frac{1}{P^2}.$$

\noindent
By considering the Taylor series of $\log(c\pm\frac{\lambda}{2})$, we write this expression as
$$I_1 =\frac{\lambda}{4}  \int \limits_0^{\sqrt{1- 1/P^2 -\lambda/2}} u_2 \log u_2 \log\left(u_2^2+\frac{1}{P^2} \right) \mathrm{d}u_2 + O\left(\lambda^3\right).$$
We now split the range of this integral according to whether $\vert u^2P^2\vert <1$ or $\vert u_2^2P^2 \vert > 1$. 
%\begin{align*}
% \int_0^{\frac{1}{P}}&u_2 \log u_2 \log(u_2^2 + \frac{1}{P^2}) \mathrm{d}u_2 + \int_{\frac{1}{P}}^{\sqrt{1-1/P^2 - \lambda/2}}u_2 \log u_2 \log(u_2^2 + \frac{1}{P^2}) \mathrm{d}u_2\\
%&= \int_0^{\frac{1}{P}}u_2\log u_2\left[2\log u_2 + O(\frac{1}{u_2^2P^2})\right] \mathrm{d}u_2 + \int_{\frac{1}{P}}^{\sqrt{1-1/P^2 - \lambda/2}}u_2 \log u_2 (-2\log P + O(P^2u_2^2)) \mathrm{d}u_2\\
%&= 2 \int_0^{\frac{1}{P}} u_2 \log^2 u_2 \mathrm{d} u_2 -2 \log P \int_{1/P}^{\sqrt{1-1/P^2-\lambda/2}} u_2 \log u_2 \mathrm{d} u_2 + O(\log^2P/P^2)\\
%&= \frac{\log P}{2} + O\left( \frac{\log^2 P}{P^2}\right).
%\end{align*}
In the first of these ranges, we have
\begin{align*}
\int_0^{1/P}u_2 \log u_2 \log(u_2^2 + \frac{1}{P^2}) \mathrm{d}u_2 &=\int_0^{1/P}u_2 \log u_2 \left[ -2\log P+ O(u_2^2P^2)\right] \mathrm{d}u_2 \\
&\ll \frac{\log^2 P}{P^2}.
\end{align*}
Meanwhile, in the second range,
\begin{align*}
\int_{1/P}^{\sqrt{1-1/P^2 - \lambda/2}}u_2& \log u_2 \log(u_2^2 + \frac{1}{P^2}) \mathrm{d}u_2\\ &= \int_{1/P}^{\sqrt{1-1/P^2 - \lambda/2}}u_2 \log u_2 \left[2\log u_2 + O(P^{-2}u_2^{-2})\right] \mathrm{d}u_2\\
&=2\int_{1/P}^{\sqrt{1-1/P^2 - \lambda/2}}u_2\log^2u_2 \mathrm{d}u_2 +O(\frac{\log P}{P^2})\\
&= \frac{1}{2} + O(\frac{\log^2 P}{P^2}).
\end{align*}

We now turn to estimating $I_2$ by bounding above trivially then using the binomial theorem to see
\begin{align*}
I_2 & \leq \left(\sqrt{1-\frac{1}{P^2} + \frac{\lambda}{2}}-\sqrt{1-\frac{1}{P^2} - \frac{\lambda}{2}}\right) \left( 1- \sqrt{1-\lambda}\right)e^{-2}
\\&= e^{-2} \left(\frac{1}{2} \lambda+ \frac{1}{4} \frac{\lambda}{P^2} + \dots\right)(1- \sqrt{1-\lambda}),
\end{align*}
which clearly tends to $0$ after dividing by $\lambda$ and taking the limit $\lambda \rightarrow 0$.
\end{proof}

\begin{lemma}\label{2mu}
Let $(\beta, \gamma, \delta) \in L^{(i)}$.
For any choice of $\bm{\xi}\in T_{\beta,\gamma, \delta}^{\pm}$, we have
$$\mu_2^{\pm}(\bm{\xi}) = 2^{-12}.$$
\end{lemma}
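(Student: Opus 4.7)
The plan is to show by a Hensel-style lifting count that, for every $t \geq 4$,
\[N^{\pm}(t) := \#\bigl\{\x \in (\ZZ/2^t \ZZ)^4 : F(\x) \equiv \pm 1 \bmod{2^t}, \, \x \equiv \bm{\xi} \bmod{16}\bigr\} = 2^{3t-12},\]
from which the claim $\mu_2^{\pm}(\bm{\xi}) = 2^{-12}$ drops out on dividing by $2^{3t}$ and taking $t \to \infty$. The point is that $\bm{\xi} \in T^{\pm}_{\beta,\gamma,\delta}$ already records that $F(\bm\xi) \equiv \pm 1 \bmod{16}$, so the first four dyadic digits are fixed and the task reduces to counting admissible lifts.

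To carry this out I would parametrise $\x = \bm{\xi} + 16\y$ with $\y \in (\ZZ/2^{t-4}\ZZ)^4$ and exploit the bilinearity of $F$ in the pairs $(x_1,x_2)$ and $(x_3,x_4)$ to expand
\[F(\bm{\xi} + 16\y) = F(\bm{\xi}) + 16\, L(\y) + 256\, Q(\y),\]
where $L$ is the obvious linear form in $\y$ (with coefficients depending on $\bm{\xi}$ and the exponents $\beta,\gamma,\delta$) and $Q(\y) = \epsilon_4 2^{\delta} y_1 y_2 - \epsilon_2\epsilon_3 2^{\beta+\gamma} y_3 y_4$. Writing $F(\bm\xi) = \pm 1 + 16 m$, the target condition becomes the affine congruence
\[m + L(\y) + 16\, Q(\y) \equiv 0 \bmod{2^{t-4}}.\]

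The crucial structural input is the constraint $\min\{\beta + \gamma, \delta\} = 0$ built into $L^{(i)}$: this ensures that at least one of $\epsilon_4 2^{\delta}$, $-\epsilon_2\epsilon_3 2^{\beta+\gamma}$ is a $2$-adic unit. For instance, when $\delta = 0$ the coefficient of $y_1$ in $L(\y) + 16\, Q(\y)$ takes the shape $\epsilon_4(\xi_2 + 16 y_2)$, which is odd because $\xi_2 \in \modstar{16}$ and hence is a unit modulo $2^{t-4}$. Consequently, for each of the $2^{3(t-4)}$ triples $(y_2, y_3, y_4)$ the equation can be uniquely solved for $y_1 \bmod{2^{t-4}}$, producing exactly $N^{\pm}(t) = 2^{3t-12}$ lifts; the case $\beta + \gamma = 0$ is symmetric, solving instead for $y_3$ or $y_4$.

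The main obstacle is really just careful $2$-adic bookkeeping in the bilinear expansion, together with verifying that one of the pair-coefficients in $F$ is odd \emph{regardless} of which branch of $L^{(i)}$ one is in. Once that is in hand, the resulting count is manifestly independent of the particular choice of $\bm\xi \in T^{\pm}_{\beta,\gamma,\delta}$, and the factor $2^{-12}$ appears cleanly as the price of imposing the mod-$16$ congruence on all four coordinates.
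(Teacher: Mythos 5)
Your argument is correct and is essentially the same as the paper's: both are Hensel-lifting arguments hinging on the observation that $\min\{\beta+\gamma,\delta\}=0$ (built into $L^{(i)}$) together with $\bm\xi \in (\ZZ/16\ZZ)^{\times 4}$ forces a $2$-adic unit somewhere. The paper simply phrases this as $v_2(\nabla F(\x))=0$ and then quotes Hensel's lemma as a black box to get $\#S(t)=2^3\#S(t-1)$ down to $\#S(4)=1$, whereas you unroll the lifting explicitly by writing $\x=\bm\xi+16\y$ and solving the resulting affine congruence for the coordinate whose coefficient is a unit; the two routes are computationally identical.
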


\begin{proof}
Denote $$S(t) := \{\x \in \left(\mathbb{Z}/2^t \mathbb{Z}\right)^4 : F(\x) \equiv \pm 1 \text{ mod } 2^t \text{  and  } \x \equiv \bm{\xi}\text{ mod } 16\},$$
for $t\geq4$ so that $$\mu_2(P) = \lim_{t \rightarrow \infty} 2^{-3t} \#S(t).$$

\noindent
We'll use Hensel's lemma to relate this to $S(4)=\{\bm{\xi}\}$. In order to do this we first, note that 
\begin{align*}
%\nabla F(\x) &= (\epsilon_42^{\delta}x_4, \epsilon_2\epsilon_32^{\beta + \gamma}x_3, \epsilon_2\epsilon_32^{\beta + \gamma}x_2,\epsilon_42^{\delta}x_1)\\
 v_2(\nabla F(\x)) &= \min\{\delta+v_2(x_4),\beta + \gamma + v_2(x_3),\beta + \gamma + v_2(x_2),\delta+v_2(x_1)\}.
\end{align*}
If $\x \in S(t)$ for any $t \geq 4$ then $x_i \equiv \xi_i \Mod 16$ and therefore $v_2(x_i) =0$ for $i=1,\dots,4.$ Moreover either  $\delta=0$ or $\beta+\gamma =0$, since $(\beta, \gamma, \delta) \in L$. Hence $$  v_2(\nabla F(\x))  =0.$$
%Suppose $\x \in \ZZ/2^t\ZZ$ such that $\x = \y + 2^{t-1}\z$ where $\y \in S(t-1)$ for $t>4$.
%Then, $\x \equiv \y \equiv \bm{\xi} \Mod 16$ and $$F(\x) \equiv F(\y) + 2^{t-1}\z \cdot \nabla F(\y) \Mod 2^t.$$
%Thus $$\x \in S(t) \iff \frac{F(\y)}{2^{t-1}} + \z \cdot \nabla F(\y) \equiv 0 \mod 2.$$
%We know that $\nabla F(\y)$ contains at least 2 odd components hence there are $2^3$ choices of $\z$ that solve this linear congruence. Therefore
Thus by Hensel's lemma, we have
$$\#S(t) = 2^3 \#S(t-1) = \ldots =2^{3(t-4)}\#S(4) = 2^{3(t-4)},$$ which completes the proof.
\end{proof}

Combining these results with expression (\ref{total}) and writing $$T_{\beta,\gamma, \delta} = T_{\beta, \gamma, \delta}^+\sqcup T_{\beta, \gamma, \delta}^-,$$  we see that
\begin{equation} \label{NP} N(P) = \frac{P^2 }{2^8\pi^2}\sum_{i \in \{1,2\}}\sum_{(\beta, \gamma, \delta) \in L^{(i)}} \frac{\#T_{\beta, \gamma, \delta}}{2^{\beta + \gamma + \delta}} + O(P^{2 - \theta}).\end{equation}
This inner sum is easily verified to be $2^{13}$ completing the proof of Theorem \ref{Stot}.\\

The other counting functions can be evaluated similarly, using Lemma \ref{counting} again although with a slightly different setup. We will make the same change of variables as before although this time the set of allowable signs and congruence classes will be informed by Section 3. In particular, \eqref{def} clearly has no real solutions if ${\sigma(b,c,d) = (+,-,-)}$, so we exclude this possibility.
Then, analogously to above our local solution counting function can be expressed as
\begin{align*}
N_{\text{loc}}(P) &=  \frac{1}{4}\sum_{\bm{\epsilon}\in S \setminus\{(+,-,-)\}} \sum_{i \in \{1,2\}}\sum_{(\beta, \gamma, \delta) \in L^{(i)}} \sum_{\bm{\xi}\in H_{\beta, \gamma, \delta}^{\pm}} \left(K^+ + K^- \right). 
\end{align*}
The $K^{\pm}$ appearing in this expression is exactly the same as in \eqref{total} so we may use Lemma \ref{counting} to write this as

\begin{equation}\label{loc} N_{\text{loc}}(P) = \frac{P^2 }{2^{10}\pi^2} \sum_{\bm\epsilon \in S \setminus\{(+,-,-)\}}\sum_{i \in \{1,2\}} \sum_{(\beta, \gamma, \delta) \in L^{(i)}} \frac{\#H_{\beta, \gamma, \delta}}{2^{\beta + \gamma + \delta}} + O(P^{2 - \theta}).\end{equation}

The conditions describing $H_{\beta, \gamma, \delta}$ are spelled out explicitly by Lemmas 4.8-4.15 of \cite{BandB}. Crucially, the size of $H_{\beta, \gamma, \delta}$ doesn't depend on ${\bm \epsilon}$, so one can compute the sum
\begin{equation}\label{locdef} \tau_{loc, 2} :=2^{-13} \sum_{i \in \{1,2\}}\sum_{(\beta, \gamma, \delta) \in L^{(i)}} \frac{\#H_{\beta, \gamma, \delta}}{2^{\beta + \gamma + \delta}}>0\end{equation}
to obtain the expression \eqref{N_loc}.

The evaluation of $N_{\text{Br}}(P)$ will follow similar lines except this time counting over different congruence classes mod 16, again described explicitly in \cite{BandB} and with a smaller set of allowable signs. Recall from Section 3 that whenever $\sigma(b,c,d)=(+,-,+)$ the surface $X_{a,b,c,d}$ always satisfies the Hasse principle. Therefore analogously to \eqref{loc}, we have
\begin{equation}\label{NBr} N_{\text{Br}}(P) = \frac{P^2}{2^{10}\pi^2} \sum_{\bm\epsilon \in S \setminus\{(+,-,-),(-,+,-)\} }\sum_{i \in \{1,2\}} \sum_{(\beta, \gamma, \delta) \in L^{(i)}} \frac{\#\widetilde{H}_{\beta, \gamma, \delta}}{2^{\beta + \gamma + \delta}} + O(P^{2 - \theta}).\end{equation}
where
\begin{equation}\label{Brdef} 0<2^{-13} \sum_{i \in \{1,2\}} \sum_{(\beta, \gamma, \delta) \in L^{(i)}} \frac{\#\widetilde{H}_{\beta, \gamma, \delta}}{2^{\beta + \gamma + \delta}} =: \sigma_{loc, 2} \leq \tau_{loc,2}.\end{equation}
This completes the proof of Theorem \ref{theorem}.

All that remains is to give a value to the 2-adic densities $\tau_{loc,2}, \sigma_{loc,2}$ defined above. In \cite{BandB}, the congruences conditions controlling local and global solubility were worked out explicitly. We choose not to reproduce these here as they are rather lengthy and numerous, and do not illuminate the reader. Instead, we simply record the following table of values for $\# H_{\beta, \gamma, \delta}$ and $\#\widetilde{H}_{\beta,\gamma, \delta}$ which was obtained by using MATLAB to enumerate the residue classes in $\left((\ZZ/16\ZZ)^{\times}\right)^4$ which satisfy the conditions outlined in \cite{BandB}. See Table 1 for the outcomes.

\begin{table}
\caption{Computing the size of $H_{\beta,\gamma,\delta}$ and $\widetilde{H}_{\beta,\gamma,\delta}$}
\begin{tabular}{| c | c | c | c | c |}
\hline
$\bm{\beta}$ & $\bm{\gamma}$ & $\bm{\delta}$ & $\bm{\#H_{\beta, \gamma, \delta}}$ & $\bm{\#\widetilde{H}_{\beta,\gamma,\delta}}$ \\
\hline
0 & 0 & 1 & 1024 & 416 \\
\hline
0 & 0 & $\geq$2 even &960 &512 \\
\hline
0&0&$\geq$3 odd &1024&544\\
\hline
0&1&0&1024&416\\
\hline
0&$\geq$2 even&0&960&448\\
\hline
0&$\geq$3 odd&0&1024&416\\
\hline
1&0&0&1024&192\\
\hline
1&1&0&1024&128\\
\hline
1&$\geq$2 even&0&1024&320\\
\hline
1&$\geq$3 odd&0&1024&256\\
\hline
2&0&0&992&480\\
\hline
2&1&0&1024&576\\
\hline
2&$\geq$2&0&768&320\\
\hline
3&0&0&1024&320\\
\hline
3&1&0&1024&384\\
\hline
3&$\geq$2&0&768&128\\
\hline
$\geq$4 even&0&0&992&480\\
\hline
$\geq$4 even&1&0&1024&576\\
\hline
$\geq$4 even&$\geq$2&0&768&320\\
\hline
$\geq$4 odd&0&0&1024&320\\
\hline
$\geq$4 odd&1&0&1024&384\\
\hline
$\geq$4 odd&$\geq$2&0&768&128\\
\hline
\end{tabular}
\end{table}

With these values, it is simple to conclude from \eqref{locdef} that $$\tau_{loc,2} = \frac{17856}{3\times 2^{13}}.$$ Likewise, we compute $$\sigma_{loc,2} = \frac{2112}{2^{13}}.$$

\end{document}